\newtheorem{theorem}{Theorem}
\newtheorem{lemma}[theorem]{Lemma}
\theoremstyle{definition}
\newtheorem{defn}[theorem]{Definition}
\numberwithin{equation}{section} 
\numberwithin{theorem}{section}
\numberwithin{figure}{section}
\begin{document}

\title[On the continuity of entropy of Lorenz maps]{On the continuity of entropy of Lorenz maps}

\author[Cooperband, Pearse, Quackenbush, Rowley, Samuel, West]{
Z.\,Cooperband, E.\,P.\,J.\,Pearse, B.\,Quackenbush, J.\,Rowley,
\\ \vspace{-1em}\\
T.\,Samuel and M.\,West}

\email{epearse@calpoly.edu, ajsamuel@calpoly.edu}

\address{\parbox[t]{0.935\textwidth}{Mathematics Department, California Polytechnic State University, CA, USA}}

\subjclass[2010]{37B40, 37E05 (Primary); 37B10 (Secondary).}

\keywords{Kneading sequences; Lorenz maps; Topological entropy.}

\begin{abstract}
We consider a one parameter family of Lorenz maps indexed by their point of discontinuity $p$ and constructed from a pair of bilipschitz functions.  We prove that their topological entropies vary continuously as a function of $p$ and discuss Milnor's monotonicity conjecture in this setting.
\end{abstract}

\maketitle

\section{Introduction and main results}

Since the pioneering work of R\'enyi \cite{R:1957} and Parry \cite{MR0142719,MR0166332,Par,Par:1979}, an increasing amount of attention has been paid to maps of the unit interval.  Their study has provided solutions to practical problems within biology, engineering, information theory and physics.  Applications appear in analogue to digital conversion \cite{1011470}, analysis of electroencephalography (EEG) data \cite{Stolz:2017},  data storage \cite{LM:1995}, electronic circuits \cite{BV:2001}, mechanical systems with impacts and friction \cite{MR2015431} and relay systems \cite{MR2001701}.

The concept of topological entropy, now ubiquitous in the study of dynamical systems, was introduced by Adler, Konheim and McAndrews \cite{MR0175106} as a measure of the complexity of a dynamical system and is an invariant under a continuous change of coordinates, called topological conjugation.  Bowen \cite{MR0442989} gave a new, but equivalent, definition for a continuous map of a (not necessarily compact) metric space. For our purposes the following formulation, given by Misiurewicz and Szlenk in \cite{MR579440} and consistent with the definition given in \cite{MR0175106}, serves as a definition of the topological
entropy.  Let $T$ be a piecewise monotonic interval map, such as a Lorenz map (see Figure~\ref{fig:Lorenz}), the \textsl{topological entropy} $h(T)$ of $T$ is defined by
	\begin{align}\label{eq:entropy}
	h(T) \coloneqq \lim_{n \to \infty} \frac{1}{n} \ln (\operatorname{Var}(T^{n})),
	\end{align}
where $\operatorname{Var}(f)$ denotes the total variation of the function $f$.

The problem of comparing the topological entropies of two smooth interval maps which are close to each other, in a suitable sense, has been extensively studied in, for instance, \cite{MR1336987,MR3264762,MR1326374,MR1410784,MR1181083,MR1736945}.  This problem has also been studied in the setting of piecewise linear maps with one increasing branch and one decreasing branch, see for example \cite{MR3035348}.  We consider this problem for Lorenz maps, a class of interval maps with a single discontinuity and two increasing branches.  These maps play an important role in the study of the global dynamics of families of vector fields near homoclinic bifurcations, see \cite{MR556582,MR2178223,MR2217319,MR481632,MR1766514,MR1773551} and references therein.

Lorenz maps and their topological entropy have been and still are investigated intensely, see for instance \cite{BHV,MR3035348,MR1336987,MR3264762,G,GH,GS,H,HR,HS,MR1410784,LSS,MR1181083,MR1736945,SSV}. The simplest example of a Lorenz map is a (normalised) \mbox{$\beta$-transformation}, and the topological entropy of such a transformation is equal to $\ln(\beta)$; this was first shown in \cite{H,Par}. However, for a general Lorenz map the question of determining the topological entropy is much more complicated.  In \cite{G} Glendinning showed that every Lorenz map is semi-conjugate to an intermediate $\beta$-transformation and gave a criterion, in terms of kneading sequences, for when the semi-conjugacy is a conjugacy; see also \cite{MR3411531,BHV,DS:2004}. Note, this criterion turns out to be equivalent to topological transitivity.

\begin{figure}
\includegraphics[height=9.5em]{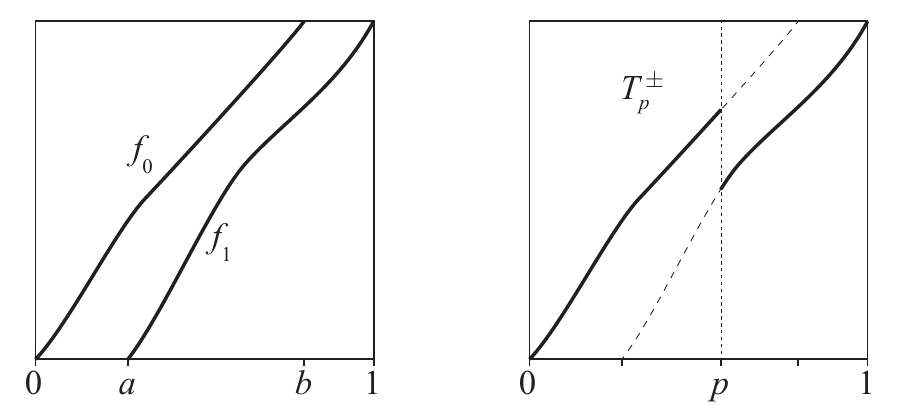}
\caption{A pair $(f_0, f_1)$ of branch functions and an associated Lorenz map.}
\label{fig:Lorenz}
\end{figure}

\begin{defn}\label{def:Lorenz}
Let $0 < a \leq p \leq b < 1$. An \textsl{upper}, or \textsl{lower}, \textsl{Lorenz map} is a map $T^+ \colon [0,1] \to [0, 1]$, respectively $T^- \colon [0,1] \to [0, 1]$, of the form 
	\begin{align*}
	T^+(x) \coloneqq \begin{cases}
	f_0(x) &\text{if} \; 0 \leq x < p, \\
	f_1(x) &\text{if} \; p \leq x \leq 1,
	\end{cases}
		\quad \text{respectively} \quad
	T^-(x) \coloneqq \begin{cases}
	f_0(x) &\text{if} \; 0 \leq x \leq p, \\
	f_1(x) &\text{if} \; p < x \leq 1.
	\end{cases}
	\end{align*}
where $f_0$ and $f_1$, called the \textsl{branch functions}, satisfy the following conditions.
	\begin{enumerate}[leftmargin=2.5em]
	\item\label{defn:part:Lorenz1} The functions $f_0 \colon [0,b] \to [0,1]$ and $f_1 \colon [a,1] \to [0,1]$ are continuous, strictly increasing and surjective.
	\item\label{defn:part:Lorenz2} There exist constants $C, c > 1$ with $C^{-1}\lvert x-y \rvert \leq \lvert f_i^{-1}(x) - f^{-1}_i(y) \rvert \leq c^{-1} \lvert x-y \rvert$ for $i \in \{ 0,1 \}$ and $x \in [0,1]$.
	\end{enumerate}
\end{defn}

When we wish to emphasis the point of discontinuity, we write $T^{\pm}_{p}$ for $T^{\pm}$.  Further, by definition, we have $h(T^{+}_{p}) = h(T^{-}_{p})$, and hence, for ease of notation, we let $h(T_{p})$ denote this common value.  Further,  for a fixed pair of branch functions, a direct consequence of \eqref{eq:entropy} is that
	\begin{align}\label{eq:entropy_lower_bound}
	h(T_{p}) \geq \ln (C)
	\end{align}
for all $p \in (a, b)$, where $a, b$ and $C$ are as in Definition~\ref{def:Lorenz}.

The main result of this article is the following.

\begin{theorem}\label{thm:continuity-of-entropy}
For a fixed pair of branch functions, $p \mapsto h(T_p)$ is continuous.
\end{theorem}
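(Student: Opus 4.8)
The plan is to reduce the statement to the observation that $T_p$ and $T_q$ have identical dynamics away from the short interval lying between their two points of discontinuity, and then to control the amount of entropy that is lost when orbits are forbidden from entering that interval. Throughout I use $h(T_p)=\lim_{n}\tfrac1n\ln(\operatorname{Var}(T_p^n))$, recalling that for a piecewise monotone map this limit equals the exponential growth rate of the lap numbers and, by subadditivity of lap numbers, may be computed as an infimum over $n$. The decisive structural input is that the bilipschitz constants $C,c>1$ of Definition~\ref{def:Lorenz} depend only on $f_0,f_1$ and not on $p$: every map in the family is uniformly expanding with the \emph{same} constants, and every map obeys the uniform lower bound $h(T_p)\ge\ln C$ from \eqref{eq:entropy_lower_bound}.

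First I would fix $p$, take a nearby $q$, set $I\coloneqq[\min(p,q),\max(p,q)]$, and note that $T_p$ and $T_q$ coincide on $[0,1]\setminus\operatorname{int}I$: the two maps use the differing branches $f_0,f_1$ only on $\operatorname{int}I$, whereas both use $f_0$ to the left of $I$ and $f_1$ to its right. Hence the survivor set $\Lambda_I\coloneqq\{x:T^n x\notin\operatorname{int}I\text{ for all }n\ge0\}$, together with the map restricted to it, is literally the same dynamical system whether computed with $T_p$ or $T_q$; write $h_{\Lambda_I}$ for its topological entropy. Since $\Lambda_I$ is an invariant subset of each map, monotonicity of entropy under restriction gives $h_{\Lambda_I}\le\min\{h(T_p),h(T_q)\}$, so
\begin{align*}
\lvert h(T_p)-h(T_q)\rvert\le\bigl(h(T_p)-h_{\Lambda_I}\bigr)+\bigl(h(T_q)-h_{\Lambda_I}\bigr).
\end{align*}
It therefore suffices to prove that each entropy deficit on the right tends to $0$ as $q\to p$, that is, as the hole $I$ shrinks to the single point $p$.

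The main obstacle is exactly this last step, which is an open-system statement: the entropy of the survivor set converges to the full entropy as the hole shrinks to a point, uniformly over the family. Here the uniform expansion is essential. For $r\in\{p,q\}$ the quantity $h(T_r)-h_{\Lambda_I}$ is the escape rate of the hole $I$, and I would bound it by comparing the exponential growth of all admissible words of length $N$ with that of the words whose cylinders avoid $\operatorname{int}I$; uniform expansion controls the number and the lengths of the depth-$j$ preimages of $I$, and the uniform lower bound $h(T_r)\ge\ln C>0$ ensures that the full word count grows at a definite exponential rate against which this escape is negligible, yielding $h(T_r)-h_{\Lambda_I}\le\varepsilon(\lvert I\rvert)$ with $\varepsilon(\lvert I\rvert)\to0$. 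Because $C$ and $c$ are independent of the parameter, the bound $\varepsilon$ is uniform in $r$, so feeding it into the displayed inequality gives $h(T_q)\to h(T_p)$. I expect the genuinely delicate point to be the uniform control of the escape rate when $p$ is close to a point whose critical orbit is periodic, where escape decays most slowly; the uniform expansion and the bound $h\ge\ln C$ are precisely the tools I would use to absorb this case, and it is worth noting that this route yields continuity without producing monotonicity, consistent with the separate status of Milnor's conjecture.
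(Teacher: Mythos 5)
Your reduction is sound as far as it goes: $T_p$ and $T_q$ do agree off $I=[\min(p,q),\max(p,q))$, the survivor set $\Lambda_I$ is a common closed invariant set on which the two maps coincide, and the triangle-type inequality $\lvert h(T_p)-h(T_q)\rvert\le (h(T_p)-h_{\Lambda_I})+(h(T_q)-h_{\Lambda_I})$ is correct. But this reduction transfers the entire content of the theorem into the claim that $h(T_r)-h_{\Lambda_I}\to 0$ uniformly as the hole shrinks to a point, and that claim is asserted rather than proved. The sketch you offer --- counting the depth-$j$ preimages of $I$ (at most $2^j$ intervals of length at most $c^{-j}\lvert I\rvert$) and noting that the full lap count grows at rate at least $C>1$ --- controls the Lebesgue measure of the set of points that eventually fall into $I$, not the entropy of the set of points that never do. To bound $h(T_r)-h_{\Lambda_I}$ you must show that the number of $n$-laps (or admissible $n$-words) surviving the excision still grows at rate $e^{n(h-\varepsilon)}$, and deleting one cylinder per time step can in principle cascade: in a coded subshift without specification, removing a single word may kill a positive proportion of all continuations. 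General Lorenz maps are exactly where this worry bites, since (as noted in the introduction) they need not be topologically transitive, so their symbolic dynamics need not satisfy any specification-type property; and the slowest-escape case, where $p$ has a periodic kneading sequence, is precisely the case the paper has to treat separately and with care. A shrinking-hole theorem of the kind you need is true for uniformly expanding maps (results in the spirit of Urba\'nski and Raith), but it is a theorem of comparable depth to the statement being proved, so invoking it without proof leaves a genuine gap.

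For contrast, the paper takes a completely different route: it characterises $h(T_p)$ as $\ln$ of the largest zero in $(1,2]$ of the kneading series $\xi_p(z)=\sum_k(\beta_k-\alpha_k)z^{-k}$ (Theorem~\ref{thm:entropy-is-the-zero}), proves one-sided continuity of $x\mapsto\tau_x^{\pm}(x)$ using the uniform bilipschitz constants of Definition~\ref{def:Lorenz}, and then perturbs that zero, with separate arguments when the kneading sequence is periodic and when the relevant zero has even multiplicity (where the intermediate value theorem alone does not suffice and the lower bound \eqref{eq:entropy_lower_bound} is used to exclude zeros below $u$). If you want to pursue your open-systems route, the missing lemma on survivor-set entropy must be stated and proved in a form valid for non-transitive Lorenz maps and uniform in the parameter; as written, the argument is incomplete at its central step.
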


This paper is arranged as follows. In Section~\ref{sec:gensetup} we provide necessary definitions and preliminary results required for the proof of Theorem~\ref{thm:continuity-of-entropy}.  Section~\ref{sec:proof} is dedicated to the proof of Theorem~\ref{thm:continuity-of-entropy} and in Section~\ref{sec:affine_Lorenz} we include a discussion pertaining to Milnor's monotonicity conjecture in the setting of affine Lorenz maps.

\section{General setup}\label{sec:gensetup}

Throughout we use the convention that $\pm$ means either $+$ or $-$ and when we write $T^{\pm}_{p}$, we require that both $T_{p}^{+}$ and $T_{p}^{-}$ are defined using the same branch functions.

The set of all infinite words over the alphabet $\{0, 1 \}$ is denoted by $\Omega$ and is equipped with the discrete product topology.  For $n \in \mathbb{N}$, define $\Omega_{n}$ to be the set of finite words over the alphabet $\{ 0, 1\}$ of length $n$, and set $\Omega^{*} \coloneqq \bigcup_{n \in \mathbb{N}_{0}} \Omega_{n}$, where by convention $\Omega_{0}$ is the set containing only the \textsl{empty word} $\varnothing$.  For $\omega = \omega_{0} \cdots \omega_{k}$ and $v = v_{0} \cdots v_{n} \in \Omega^{*}$, we set $\omega v \coloneqq \omega_{0} \cdots \omega_{k} v_{0} \cdots v_{n}$, that is the \textsl{concatenation} of $\omega$ and $v$, and let $\overline{v} \coloneqq v v v \cdots$.  The \textsl{length} of $v \in \Omega^{*}$ is denoted by $\lvert v \rvert$ with $\lvert \varnothing \rvert=0$ and, 
for a natural number $k \leq \lvert v \rvert$, we set $v\lvert_{k} \coloneqq v_{0} \cdots v_{k-1}$.  We use the same notations when $v$ is an infinite word.

The continuous map $S \colon \Omega \to \Omega$ defined by $S( \omega_{0} \omega_{1} \cdots ) \coloneqq \omega_{1} \omega_{2} \cdots$ is called the \mbox{\textsl{left-shift}}.  We also allow for $S$ to act on finite words as follows. For $k \in \mathbb{N}_{0}$ and $v = v_{0} \cdots v_{k} \in \Omega^{*}$, we set $S(v) = v_{1} \cdots v_{k}$, if $k \geq 1$ and $S(v) = \varnothing$ otherwise.

The \textsl{upper} and \textsl{lower itinerary maps} $\tau_{p}^{\pm}  \colon [0,1] \to \Omega$ encode the orbit of a point $x \in [0, 1]$ under $T_p^\pm$ and are given by $\tau_{p}^{+}(x) = \omega_{0} \omega_{1} \cdots$ and $\tau_{p}^{-}(x) = v_{0} v_{1} \cdots$ where
	\begin{align*}
	\omega_{k} \coloneqq \begin{cases}
	0 & \text{if} \; (T^{+}_{p})^{k}(x) < p,\\
	1 & \text{if} \; (T^{+}_{p})^{k}(x) \geq p,
	\end{cases}
	\quad \text{and} \quad
	v_{k} \coloneqq 
	\begin{cases}
	0 & \text{if} \; (T^{-}_{p})^{k}(x) \leq p,\\
	1 & \text{if} \; (T^{-}_{p})^{k}(x) > p.
	\end{cases}
	\end{align*}
Here, for $n \in \mathbb{N}$, we denote by $(T^{\pm}_{p})^{n}$ the $n$-fold composition of $T^{\pm}_{p}$ with itself where $(T^{\pm}_{p})^{0}$ is set to be the identity map.  The infinite words $\alpha \coloneqq \tau_{p}^{-}(p)$ and $\beta \coloneqq \tau_{p}^{+}(p)$ are called the \textsl{kneading sequences} of $T^{\pm}_{p}$.  

We say that $\tau_{p}^{\pm}(p)$ is periodic if there exists $n \in \mathbb{N}$ such that $(T_{p}^{\pm})^{n}(p) = p$, and the \textsl{period} of $\tau_{p}^{\pm}(p)$ is the smallest $n \in \mathbb{N}$ for which this holds.  If $\tau_{p}^{\pm}(p)$ are periodic, then there exists $v, \omega \in \Omega^{*}$ such that $\tau_{p}^{+}(p) = \overline{v}$ and $\tau_{p}^{-}(p) = \overline{\omega}$.

\begin{lemma}[{\cite{BHV,BarnsleyMihalache}}]\label{thm:always-continuous}
The maps $x \mapsto \tau_x^{\pm}(x)$ are strictly increasing. Additionally, $x \mapsto \tau_x^+(x)$ is right-continuous and $x \mapsto \tau_x^-(x)$ is left-continuous.
\end{lemma}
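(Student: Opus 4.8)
The plan is to equip $\Omega$ with the lexicographic order $\prec$ induced by $0 < 1$, for which convergence in the product topology is exactly the stabilisation of arbitrarily long prefixes, and to exploit that each branch is expanding: from the bilipschitz bound of Definition~\ref{def:Lorenz}, namely $\lvert f_i^{-1}(u) - f_i^{-1}(w)\rvert \leq c^{-1}\lvert u-w\rvert$ with $c>1$, one gets $\lvert f_i(u) - f_i(w)\rvert \geq c\,\lvert u-w\rvert$. Both monotonicity and the one-sided continuity will follow from a single coupling argument run \emph{on the diagonal}, rather than from a ``monotone in each variable separately'' estimate, which in fact fails here because raising $p$ simultaneously raises the symbol threshold and moves the base point.

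For monotonicity, fix $x<y$ and set $u_k \coloneqq (T_x^{+})^{k}(x)$ and $w_k \coloneqq (T_y^{+})^{k}(y)$, so that by definition the \kth symbol of $\tau_x^+(x)$ is $1$ exactly when $u_k \geq x$, and that of $\tau_y^+(y)$ is $1$ exactly when $w_k \geq y$. I would first observe that as long as the two symbol sequences agree up to step $k$, the \emph{same} branch is applied to both orbits, so, each $f_i$ being strictly increasing, the strict inequality $u_k < w_k$ propagates, while expansion gives $\lvert u_k - w_k \rvert \geq c^{k}(y-x)$. Let $m$ be the first index at which the symbols disagree, so $u_m < w_m$. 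A disagreement in which the symbol is $1$ for $u_m$ and $0$ for $w_m$ means $u_m \geq x$ and $w_m < y$, which together with $u_m < w_m$ forces $x \leq u_m < w_m < y$ and hence $\lvert u_m - w_m\rvert < y-x$, contradicting $\lvert u_m - w_m\rvert \geq c^{m}(y-x)$. Thus the only admissible disagreement is $u_m < x$ and $w_m \geq y$, placing a $0$ against a $1$ and yielding $\tau_x^+(x) \prec \tau_y^+(y)$. Finally, since $\lvert u_k - w_k\rvert \geq c^{k}(y-x) \to \infty$ while all orbit points lie in $[0,1]$, the symbols cannot agree forever, so such an $m$ exists; the lower itinerary is handled identically using the strict threshold convention.

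For right-continuity of $x \mapsto \tau_x^+(x)$, I would take $x_n \downarrow x$, fix $N \in \mathbb{N}$, and rerun the coupling with base distance $\delta_n = x_n - x \to 0$. Suppose, for contradiction, that the first-disagreement index is less than $N$ for infinitely many $n$; by pigeonhole some fixed $m < N$ occurs infinitely often. Along that subsequence the two orbits use the same branches for the first $m$ steps, so $(T_{x_n}^{+})^{m}(x_n)$ is the image of $x_n$ under a fixed composition of continuous branch maps and therefore converges to $u_m$; but the admissible disagreement at $m$ forces $u_m < x$ and $(T_{x_n}^{+})^{m}(x_n) \geq x_n$, and passing to the limit gives $u_m \geq x$, a contradiction. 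Hence for all large $n$ the first $N$ symbols agree, which is precisely convergence in $\Omega$. The left-continuity of $x \mapsto \tau_x^-(x)$ follows by the mirror-image argument with $x_n \uparrow x$.

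The step I expect to be the main obstacle, and the one I would treat most carefully, is the interplay between the open/closed convention at the discontinuity and the side of continuity. In the limit step above the contradiction survives only because the admissible disagreement for the upper map pins $u_m$ \emph{strictly} below $x$, a consequence of the convention that $T^{+}$ assigns $1$ on $\{\,\cdot \geq p\,\}$; the boundary case $u_m = x$ is exactly what makes the upper itinerary fail to be left-continuous and the lower itinerary fail to be right-continuous. Expansivity is the other indispensable ingredient, used both to discard the wrong-type disagreement in the monotonicity argument and to guarantee that a disagreement occurs at all.
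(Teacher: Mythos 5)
The paper does not prove this lemma; it is quoted from the cited references, so there is no in-paper argument to compare against. Your proof is correct and self-contained. The coupling argument for monotonicity is sound: agreement of symbols up to the first disagreement index $m$ forces the same branches to be applied to both orbits, the bilipschitz condition of Definition~\ref{def:Lorenz}\,(\ref{defn:part:Lorenz2}) does give $\lvert f_i(u)-f_i(w)\rvert \geq c\,\lvert u-w\rvert$, and the expansion estimate $w_m - u_m \geq c^m(y-x)$ correctly rules out the ``$1$ against $0$'' disagreement while guaranteeing that some disagreement occurs. The one-sided continuity argument is also correct, and you have put your finger on exactly the delicate point: the admissible disagreement pins $(T_x^+)^m(x)$ \emph{strictly} below $x$ (because the upper map assigns $1$ on $[p,1]$), which is what survives the passage to the limit and yields the contradiction; the boundary case $(T_x^+)^m(x)=x$ is precisely what destroys left-continuity of $x \mapsto \tau_x^+(x)$. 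Two very minor points worth making explicit in a written version: (a) ``strictly increasing'' is with respect to the lexicographic order on $\Omega$, which you do state; and (b) in the continuity step, the identity $(T_{x_n}^+)^m(x_n) = f_{i_{m-1}}\circ\cdots\circ f_{i_0}(x_n)$ with a composition that is \emph{independent of $n$} holds because the first $m$ symbols agree along the chosen subsequence, and the intermediate points stay in the domains $[0,b]$ and $[a,1]$ of the respective branches, so continuity of the fixed composition is legitimate.
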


The following lemma extends this result.

\begin{lemma}\label{thm:nonperiodic-to-continuous}
If $p \neq a$ and $\beta$ is non-periodic, then $x \mapsto \tau_x^+(x)$ is continuous at $p$ and if $p \neq b$ and $\alpha$ is non-periodic, then $x \mapsto \tau_x^-(x)$ is continuous at $p$.
\end{lemma}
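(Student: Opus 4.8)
The plan is to exploit the one-sided continuity already granted by Lemma~\ref{thm:always-continuous}. Since $x \mapsto \tau^+_x(x)$ is known to be right-continuous, establishing continuity at $p$ reduces to proving \emph{left}-continuity there; symmetrically, for $x \mapsto \tau^-_x(x)$ one only needs right-continuity at $p$. I would treat the first assertion, the second being the mirror image under the interchange of the two branches and of the strict/non-strict conventions. Because $\Omega$ carries the product topology, it suffices to show that $\tau^+_x(x) \to \beta$ coordinatewise as $x \to p^-$, i.e. that for every $N$ there is a left neighbourhood of $p$ on which the first $N$ itinerary symbols of $x$ agree with $\beta|_N$.

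First I would note that the hypothesis $p \neq a$ is exactly what makes the problem well posed on the left: for $x < p$ the map $T^+_x$ is defined only when $x \geq a$ (so that $f_1$ may be applied on $[x,1]$), and $p > a$ guarantees a nontrivial interval $(a,p)$ of admissible parameters. Write $y_k \coloneqq (T^+_p)^k(p)$ and $y_k(x) \coloneqq (T^+_x)^k(x)$. The heart of the argument is an induction on $k$ proving two statements simultaneously: (i) $y_k(x) \to y_k$ as $x \to p^-$, and (ii) for $x$ in a suitable left neighbourhood of $p$ the $k$-th symbol of $\tau^+_x(x)$ equals $\beta_k$. The crucial input is that $\beta$ being non-periodic means, by the definition of periodicity, that $y_k \neq p$ for every $k \geq 1$, so each orbit point of $p$ lies \emph{strictly} on one side of the discontinuity. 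If $y_k > p$, then for $x$ close to $p$ we have $y_k(x) > x$, so the branch $f_1$ is selected in both $T^+_x$ and $T^+_p$ and, $f_1$ being continuous, $y_{k+1}(x) = f_1(y_k(x)) \to f_1(y_k) = y_{k+1}$; likewise, if $y_k < p$, the branch $f_0$ is selected and continuity of $f_0$ propagates the convergence. In either case the selected branch forces the $k$-th symbol to match $\beta_k$. The base case $k = 0$ is the one genuinely delicate point: here $y_0 = p$, so (i) is just $x \to p$, while (ii) holds because the convention defining $\tau^+$ assigns the symbol $1$ at the discontinuity itself, matching $\beta_0 = 1$; continuity of $f_1$ then gives $y_1(x) = f_1(x) \to f_1(p) = y_1$, launching the induction.

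The main obstacle is thus conceptual rather than computational: one must recognise that the \emph{only} mechanism by which $\tau^+_x(x)$ could fail to converge to $\beta$ from the left is an orbit point of $p$ sitting exactly on the discontinuity, and that this is precisely periodicity. Once this is isolated, non-periodicity supplies, for each fixed $k$, a definite gap $|y_k - p| > 0$; since the product topology requires control of only finitely many coordinates at a time, these finitely many gaps suffice to fix a common left neighbourhood of $p$ on which $\tau^+_x(x)|_N = \beta|_N$. It is worth observing that even if the orbit accumulates at $p$ as $k \to \infty$ no difficulty arises, since for each target length $N$ only the gaps at $k < N$ are used. Finally, combining the resulting left-continuity with the right-continuity from Lemma~\ref{thm:always-continuous} yields continuity of $x \mapsto \tau^+_x(x)$ at $p$; the statement for $\tau^-$ follows by the symmetric argument with $x \to p^+$, the hypothesis $p \neq b$ ensuring that $T^-_x$ is defined for $x$ slightly larger than $p$.
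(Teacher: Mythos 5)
Your proposal is correct and follows essentially the same route as the paper: reduce to left-continuity via Lemma~\ref{thm:always-continuous}, control only the first $N$ itinerary symbols, and induct along the orbit of $p$, using non-periodicity to guarantee a strict gap $\lvert (T_p^+)^k(p) - p \rvert > 0$ at each step so that both orbits select the same branch. The only difference is one of implementation: the paper makes the left neighbourhood explicit via the bilipschitz constants $c, C$ (obtaining the two-sided estimate $\delta' c^n \leq (T_p^+)^n(p) - (T_{p-\delta'}^+)^n(p-\delta') \leq \delta' C^n$), whereas you propagate the convergence softly through continuity of $f_0$ and $f_1$; both are valid.
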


\begin{proof}
We prove the first statement, as the proof of the second statement is identical.  Fix $\varepsilon>0$ and choose a natural number $N > 2$ with $2^{-N} < \varepsilon$. By definition and Lemma~\ref{thm:always-continuous}, it is sufficient to show there exists $\delta \in (0, p-a)$ such that $\tau_{p-\delta'}^+(p-\delta')\vert_N = \tau_p^+(p)\vert_N$ for all $\delta' \in (0, \delta)$.  This means we require $\delta >0$ so that for $n \in \{ 0, 1, \dots, N-1 \}$ and $\delta' \in (0, \delta)$ either 
	\begin{align}\label{eqn:both-on-the-same-side}
	\begin{aligned}
	(T_{p-\delta'}^+)^n(p-\delta') < &p-\delta' \; \text{and} \; (T_p^+)^n(p) < p  \\
	&\quad\text{or}\\
	(T_{p-\delta'}^+)^n(p-\delta') \geq &p-\delta' \; \text{and} \; (T_p^+)^n(p) > p.
	\end{aligned}
	\end{align}
To this end, let $c$ and $C$ be as in Definition~\ref{def:Lorenz} and choose $\delta \in (0, p-a)$ such that
	\begin{align}\label{eqn:delta_min_sqeeze}
	0 < \delta < \min \left\{ {\frac{(T^+_p)^k(p) - p}{C^k}} \colon k \in \{ 1, 2, \dots, N - 1 \} \right\}.
	\end{align}
Note, since $p \neq a$ and since $\beta$ is not periodic, the value on the left-hand-side of \eqref{eqn:delta_min_sqeeze} is positive.  We claim, for all $n \in \{ 0, 1, \dots, N -1 \}$ and $\delta' \in (0, \delta)$, that
	\begin{align}\label{eqn:inductive_close}
	\delta' c^n \leq (T_p^+)^n(p) - (T_{p-\delta'}^+)^n(p-\delta') \leq \delta' C^n.
	\end{align}
The base case, $n=0$, is immediate. Assume \eqref{eqn:inductive_close} holds for some $n \in \{ 0, 1, \dots, N-1 \}$. If $(T_p^+)^n(p) < p$, then \eqref{eqn:inductive_close} implies $(T_{p-\delta'}^+)^n(p-\delta') \leq (T_p^+)^n(p) -\delta' c^n < p-\delta'$.  If $(T_p^+)^n(p) \geq p$, then \eqref{eqn:inductive_close} implies $(T_p^+)^n(p) - p > \delta' C^n$; combining this with \eqref{eqn:delta_min_sqeeze} yields $(T_{p-\delta'}^+)^n(p-\delta') \geq (T_{p}^{+})^{n}(p) - \delta' C^{n} > p > p-\delta'$.  Therefore, by definition, we have \eqref{eqn:inductive_close} for $n+1$.  To complete the proof, notice that \eqref{eqn:inductive_close} implies \eqref{eqn:both-on-the-same-side}.
\end{proof}

In our proof of Theorem~\ref{thm:continuity-of-entropy}, we use the following Laurent series which can be thought of as a generating function of the kneading sequences $\alpha = \tau_{p}^{-}(p) = \alpha_{1} \alpha_{2} \cdots$ and $\beta = \tau_{p}^{+}(p) = \beta_{1} \beta_{2} \cdots$.  For $z \in \mathbb{C}\setminus\{0\}$, set
	\begin{align}\label{eqn:xi}
	\xi_{p}(z) \coloneqq \sum_{k=0}^{\infty}(\beta_{k} - \alpha_{k})z^{-k}.
	\end{align}
Observe that the interval $(1, 2)$ belongs to the domain of convergence of $\xi_{p}$. Further, we have the following result, which identifies the maximal zero of $\xi_{p}$ and the value $\gamma = \gamma_{p} \coloneqq \exp(h(T_{p}))$.

\begin{theorem}[{\cite{BHV,GH}}]\label{thm:entropy-is-the-zero}
The topological entropy of $T^{\pm}_p$ is equal to $\ln(r)$, where $r$ is the maximal positive real zero of $\xi_p$ in the interval $(1, 2]$.  Additionally, if the maximal zero of $\xi_p$ in the interval $(1, 2]$ is not simple, then this is the only zero of $\xi_p$ in the interval $(1, 2]$.
\end{theorem}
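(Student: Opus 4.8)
The plan is to turn the analytic quantity $h(T_p)$ into a combinatorial growth rate, and then to match the associated lap-counting generating function with the series $\xi_p$ of \eqref{eqn:xi}, so that the reciprocal of its radius of convergence — which is the growth rate — becomes exactly the largest zero of $\xi_p$ in $(1,2]$. First I would reduce $h(T_p)$ to a count. Since each branch $f_0,f_1$ is continuous, strictly increasing and surjective by Definition~\ref{def:Lorenz}, the iterate $T_p^n$ is increasing and continuous on each of its laps (its maximal intervals of monotonicity), and the exponential growth rate of $\operatorname{Var}(T_p^n)$ in \eqref{eq:entropy} coincides with that of the lap number $\ell_n$, a standard equivalence for piecewise monotone maps; thus $\tfrac1n\ln\ell_n\to h(T_p)=\ln\gamma$, with $\gamma=\gamma_p$ as defined before \eqref{eqn:xi}. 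The discontinuities of $T_p^n$ are precisely the points whose orbit meets $p$ strictly before time $n$, so $\ell_n-1$ counts the solutions of $T_p^j(x)=p$ with $0\leq j<n$. Using the strict monotonicity of the itinerary order in Lemma~\ref{thm:always-continuous}, I would identify $\ell_n$, up to a bounded additive error, with the number $N_n$ of length-$n$ words occurring as prefixes of itineraries, i.e. words all of whose shifts lie between $\alpha=\tau_p^-(p)$ and $\beta=\tau_p^+(p)$ in the lexicographic interval order. Hence $h(T_p)=\lim_n\tfrac1n\ln N_n$, and since $T_p$ has two laps while \eqref{eq:entropy_lower_bound} gives $\gamma\geq C>1$, we already know $\gamma\in(1,2]$.

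The technical heart is a generating-function identity. Writing $t=z^{-1}$ and $L(t)=\sum_{n\geq 0}\ell_n t^n$, the radius of convergence of $L$ is $\gamma^{-1}$, so $\gamma$ is the reciprocal of the smallest positive singularity of $L$. From the two-sided admissibility description above I would derive, by a renewal/inclusion--exclusion argument that stratifies admissible words according to the last shift meeting the upper boundary $\beta$ or the lower boundary $\alpha$, a functional equation of the shape $L(t)\,\xi_p(1/t)=E(t)$ with $E$ analytic and non-vanishing at $t=\gamma^{-1}$. This is the Lorenz analogue of Parry's computation for $\beta$-transformations, in which the expansion of $1$ furnishes a power series whose reciprocal root equals $\beta$; here the two one-sided kneading sequences assemble into the single series $\xi_p$. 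Given the identity, the smallest positive zero of $t\mapsto\xi_p(1/t)$ coincides with the smallest positive singularity $\gamma^{-1}$ of $L$, so the largest zero $r$ of $\xi_p$ in $(1,2]$ satisfies $r=\gamma=\exp(h(T_p))$. Because the coefficients of $L$ are nonnegative, the Vivanti--Pringsheim theorem places its dominant singularity on the positive real axis at $t=\gamma^{-1}$; transported through the identity this shows that $\xi_p$ genuinely vanishes at the real point $r=\gamma$ and has no zero in $(\gamma,2]$, pinning $r$ down as the maximal positive real zero in $(1,2]$.

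For the simplicity addendum I would relate the order of vanishing of $\xi_p$ at $r$ to the growth type of $\ell_n$ and to the combinatorial type of $(\alpha,\beta)$. A simple zero corresponds to a simple pole of $L$, hence to $\ell_n\sim c\,\gamma^n$, the transitive (equivalently mixing with spectral gap) situation that is precisely Glendinning's conjugacy criterion recalled in the introduction; a non-simple zero corresponds to the non-transitive, renormalizable regime, in which the sign behaviour of the truncations of $\xi_p$ on $(1,2]$ forces $\xi_p>0$ on $(1,2]\setminus\{r\}$ and thereby excludes any further zero there. Concretely I expect a double zero to yield $\xi_p(r)=\xi_p'(r)=0$ together with a monotonicity/positivity constraint on the partial sums that rules out a second sign change. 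The step I expect to be the main obstacle is the clean derivation of $L(t)\,\xi_p(1/t)=E(t)$ as an identity of convergent series on $(1,2]$: one must handle the genuinely two-sided constraints coming from both $\alpha$ and $\beta$, absorb the bounded discrepancy between $\ell_n$ and $N_n$, and treat separately the (eventually) periodic case — exactly the regime producing a non-simple zero, where one must also verify that $E$ does not vanish at $t=\gamma^{-1}$ and cancel the pole.
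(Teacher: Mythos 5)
The first thing to note is that the paper does not prove this statement: Theorem~\ref{thm:entropy-is-the-zero} is imported from \cite{BHV,GH}, so there is no in-paper argument to compare yours against, and your proposal has to be judged on its own terms. Its skeleton is the right one --- it is the Milnor--Thurston kneading-theory route as adapted to Lorenz maps in \cite{GH}: entropy equals the exponential growth rate of lap numbers, lap numbers are counted by words admissible between $\alpha$ and $\beta$, and a generating-function identity transfers the dominant (Pringsheim) singularity of the lap-counting series $L$ to the smallest positive zero of $t\mapsto\xi_p(1/t)$.

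As a proof, however, it has two genuine gaps, both of which you partly concede. First, the identity $L(t)\,\xi_p(1/t)=E(t)$ is only asserted ``of the shape''; everything downstream depends on it, and in particular on $E$ being analytic and non-vanishing not merely at $t=\gamma^{-1}$ but on all of $[1/2,\gamma^{-1}]$. Without that, the claim that $\xi_p$ has no zero in $(\gamma,2]$ does not follow: for $z\in(\gamma,2]$ one has $L(1/z)>0$, so $\xi_p(z)=0$ exactly when $E(1/z)=0$, and you have not excluded zeros of $E$ there. You would need to exhibit $E$ explicitly from the two-branch kneading increments rather than postulate its properties. Second, the addendum --- that a non-simple maximal zero in $(1,2]$ forces $\xi_p$ to have no other zero in that interval --- is not argued at all; you state what you ``expect'' a double zero to yield and invoke an unformulated ``monotonicity/positivity constraint on the partial sums.'' Since this addendum is precisely what Section~\ref{sec:proof} leans on in the even-multiplicity case, it cannot be waved through by analogy with the transitive/renormalizable dichotomy; it needs an actual argument about the structure of $(\alpha,\beta)$ in the non-transitive regime, or a citation to where \cite{BHV} or \cite{GH} establishes it.
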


\section{Proof of Theorem~\ref{thm:continuity-of-entropy}}\label{sec:proof}

In Section~\ref{sec:nonperiodic} and Section~\ref{sec:periodic} for a fixed pair of branch functions we prove that the map $p \mapsto h(T_{p})$ is left-continuous; right-continuity follows by an identical argument, see Section~\ref{sec:right-continuity} for further details. The proof of left-continuity is subdivided into two \mbox{sub-cases}: when the kneading sequences of $T_{p}^{\pm}$ are not periodic, and when they are periodic.  For each sub-case we use the same approach.
	\begin{enumerate}[leftmargin=2.5em]
	\item\label{strategy_1} Fix $p \in (a, b)$ and $\varepsilon > 0$ with $(\gamma -\varepsilon, \gamma+\varepsilon) \subseteq (1,2)$.
	\item\label{strategy_2} Show there exists $\delta>0$ such that $\xi_{p-\delta}(x)$ has a maximal zero $r \in (\gamma-\varepsilon,\gamma+\varepsilon)$. 
	\item\label{strategy_3} Show there are no zeros larger than $r$.
	\end{enumerate}
With this at hand, Theorem~\ref{thm:entropy-is-the-zero} allows us to conclude that $\ln(r) = h(T_{p-\delta})$. Note, in Step (ii) we must take into account the multiplicity of $\gamma$; see Figure~\ref{fig:gamma-nbd}. If $\gamma$ has odd multiplicity, one can appeal to the intermediate value theorem, but more care is required in the case when $\gamma$ has even multiplicity.

\subsection{Case 1: $\beta$ non-periodic}\label{sec:nonperiodic}

Fix $\varepsilon > 0$ with $(\gamma -\varepsilon, \gamma+\varepsilon) \subseteq (1,2)$. By Lemma~\ref{thm:nonperiodic-to-continuous}, the assumption that $\beta$ is non-periodic ensures $x \mapsto \tau_x^{\pm}(x)$ are left-continuous at $p$.

To prove \eqref{strategy_2}, that is there exists $\delta>0$ such that $\xi_{p-\delta}(x)$ has a maximal zero $r \in (\gamma-\varepsilon,\gamma+\varepsilon)$, we replace the infinite sum $\xi_p(x)$ with a partial sum (a polynomial) and approximate $\gamma_{p-\delta}$ as the root of this polynomial. To this end, for $n \in \mathbb{N}$, set 
	\begin{align*}
	R_{p, n}(x) \coloneqq \sum_{k=n}^{\infty}(\beta_{k}-\alpha_{k})x^{-k}
	\quad \text{so that} \quad
	\xi_{p}(x) = \sum_{k=0}^{n-1}(\beta_{k}-\alpha_{k})x^{-k}+R_{p, n}(x).
	\end{align*}

\begin{lemma}\label{lem:claim1_non-periodic}
For $n \in \mathbb{N}$, there exists $\delta\,>\,0$ so that, for $\delta' \in (0, \delta)$ and $x \in (1,2)$,
	\begin{align}\label{eqn:claim-bound}
	\lvert \xi_{p-\delta'}(x) - \xi_p(x) \rvert \leq \frac{2x^{-n}}{1-x^{-1}}.
	\end{align}
\end{lemma}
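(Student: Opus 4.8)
The plan is to exploit the left-continuity of the itinerary maps at $p$ to arrange that the two generating functions $\xi_{p-\delta'}$ and $\xi_p$ share their leading coefficients, so that their difference is controlled by a geometric tail that is uniform in $x$. Since $\beta$ is non-periodic, Lemma~\ref{thm:nonperiodic-to-continuous} gives that $x \mapsto \tau_x^+(x)$ is continuous, and in particular left-continuous, at $p$, while Lemma~\ref{thm:always-continuous} gives left-continuity of $x \mapsto \tau_x^-(x)$ at $p$ unconditionally. Recalling that convergence in the product topology on $\Omega$ amounts to agreement on ever longer initial segments, I would first fix $n$ and invoke these two left-continuity statements to produce a single $\delta > 0$ such that, for every $\delta' \in (0, \delta)$, the kneading sequences $\tau_{p-\delta'}^+(p-\delta')$ and $\tau_{p-\delta'}^-(p-\delta')$ agree with $\beta$ and $\alpha$, respectively, in their initial $n$ coordinates; that is, the coefficients of $\xi_{p-\delta'} - \xi_p$ indexed $k = 0, \dots, n-1$ all vanish.

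With such a $\delta$ in hand, write $\alpha'$ and $\beta'$ for the kneading sequences of $T^{\pm}_{p-\delta'}$. By the choice of $\delta$, for $x \in (1,2)$ one then has
	\[
	\xi_{p-\delta'}(x) - \xi_p(x) = \sum_{k=n}^{\infty}\bigl[(\beta'_{k} - \alpha'_{k}) - (\beta_{k} - \alpha_{k})\bigr] x^{-k}.
	\]
Each bracketed coefficient is a difference of two integers lying in $\{-1, 0, 1\}$, hence is bounded in modulus by $2$. The triangle inequality together with the geometric series (valid since $x^{-1} < 1$ for $x \in (1,2)$) then yields
	\[
	\lvert \xi_{p-\delta'}(x) - \xi_p(x) \rvert \leq 2 \sum_{k=n}^{\infty} x^{-k} = \frac{2 x^{-n}}{1 - x^{-1}}.
	\]
The coefficient bound $2$ is independent of $x$, so the single $\delta$ produced above works simultaneously for all $x \in (1,2)$, which is exactly the uniformity demanded by the statement.

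The only genuine content lies in the first step: translating the topological left-continuity of the itinerary maps into the combinatorial assertion that finitely many leading symbols of the kneading sequences stabilize as $\delta' \to 0^+$. This is precisely where the hypothesis that $\beta$ is non-periodic is used, since without it Lemma~\ref{thm:nonperiodic-to-continuous} need not apply and the upper itinerary could jump at $p$, destroying the matching of initial segments. Everything after that is the routine geometric-tail estimate above, uniform in $x$ because every coefficient of $\xi$ is bounded by $1$ in modulus.
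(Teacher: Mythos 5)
Your proposal is correct and follows essentially the same route as the paper: use left-continuity of $x \mapsto \tau_x^{\pm}(x)$ at $p$ (with Lemma~\ref{thm:nonperiodic-to-continuous} supplying the $+$ case via non-periodicity of $\beta$) to make the first $n$ coefficients of $\xi_{p-\delta'}-\xi_p$ vanish, and then bound the remaining tail by a geometric series. The only cosmetic difference is that the paper bounds the two tails $R_{p,n}$ and $R_{p-\delta',n}$ separately by $x^{-n}/(1-x^{-1})$ each and adds, whereas you bound the combined coefficients by $2$ directly; the estimates are identical.
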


\begin{proof}
For $m \in \mathbb{N}$, we have $\beta_{m}-\alpha_{m} \in \{-1, 0, 1\}$, whence, for $n \in \mathbb{N}$ and $x \in (1, 2)$,
	\begin{align}\label{eqn:Rp-bound}
	\lvert R_{p, n}(x) \rvert &\leq \sum_{k=n}^{\infty}x^{-k}=\frac{1}{1-x^{-1}}-\frac{1-x^{-n}}{1-x^{-1}}=\frac{x^{-n}}{1-x^{-1}}.
	\end{align}
Let $n \in \mathbb{N}$ be fixed. Since the maps $x \mapsto \tau_x^\pm(x)$ are both left-continuous at $p$, there exists $\delta > 0$ such that, if $\delta' \in (0, \delta)$, then $\tau_{p-\delta'}^{\pm}(p-\delta')\vert_{n} = \tau_{p}^{\pm}(p)\vert_{n}$.  As \eqref{eqn:Rp-bound} also holds for $R_{p-\delta', n}(x)$, we have established \eqref{eqn:claim-bound}.
\end{proof}

\begin{figure}
  \centering
  \begin{subfigure}[b]{0.475\linewidth}
    \centering\includegraphics[height=7em]{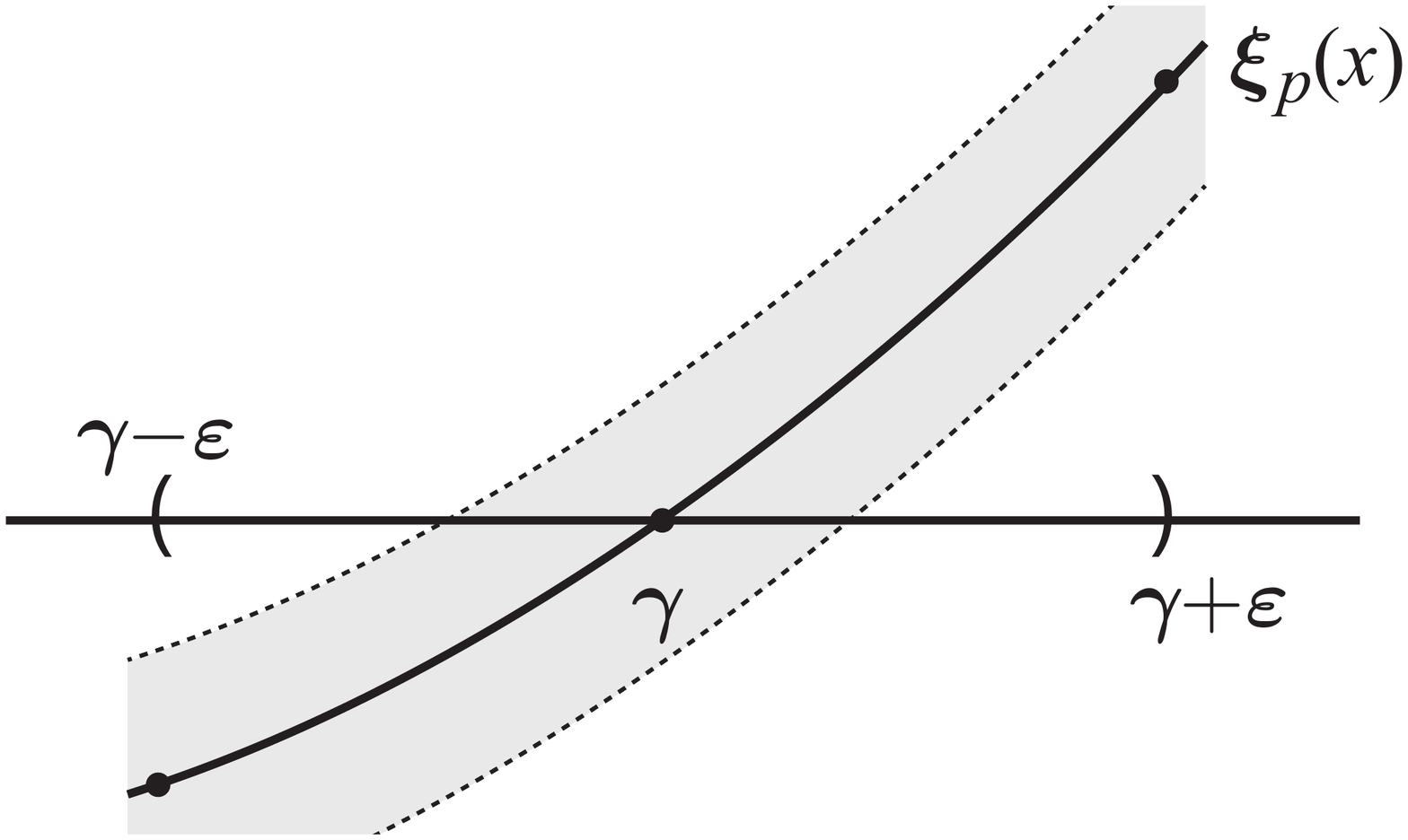}
    \subcaption{When $\gamma$ has odd multiplicity.}
  \end{subfigure}%
  \hspace{1em}
  \begin{subfigure}[b]{0.475\linewidth}
    \centering\raisebox{1.15em}{\includegraphics[height=6em]{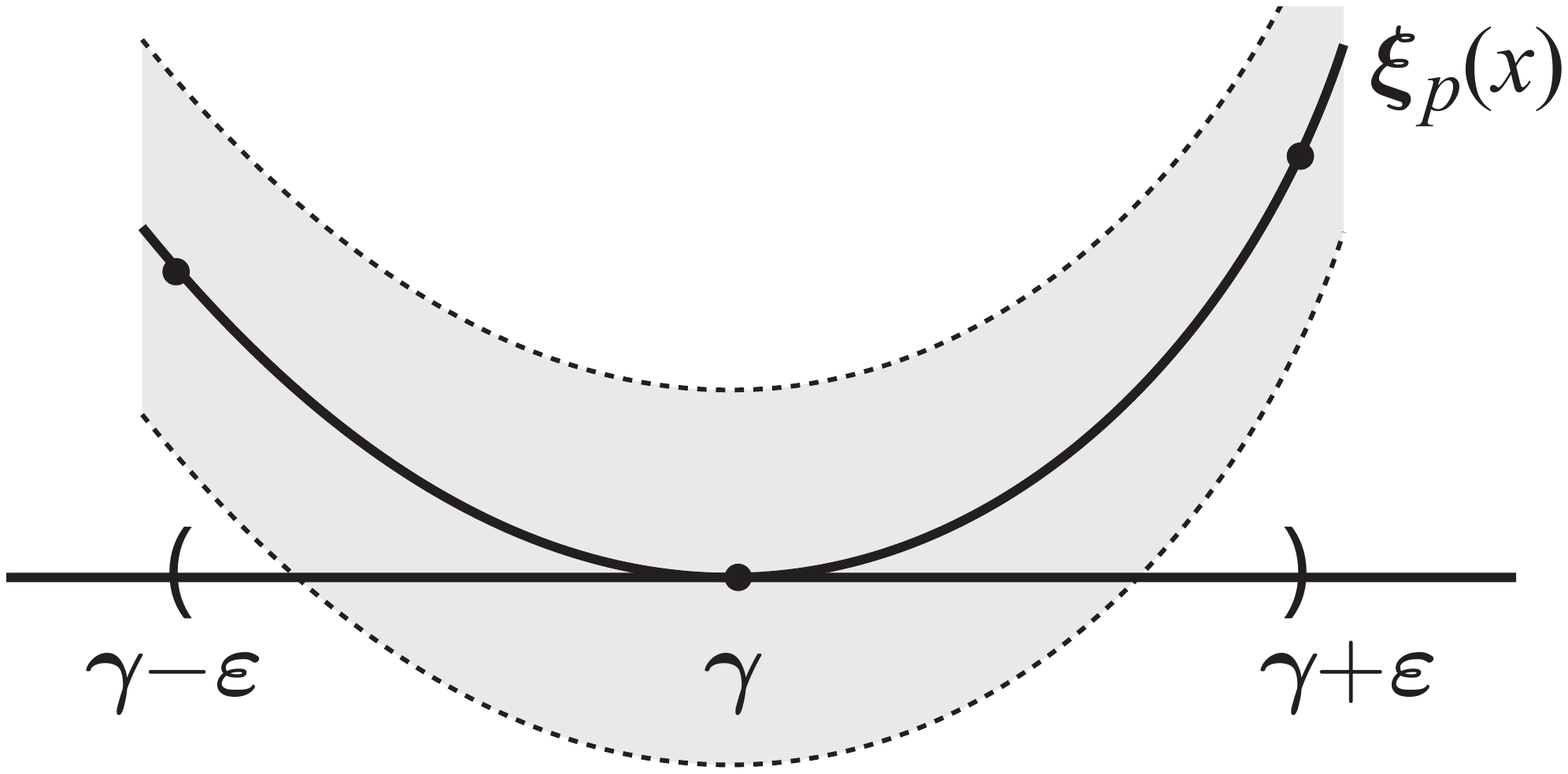}}
    \subcaption{When $\gamma$ has even multiplicity.}
  \end{subfigure}
  \caption{The graph of $\xi_p$ together with a neighbourhood in which the graph of $\xi_{p-\delta}$ belongs.}
  \label{fig:gamma-nbd}
\end{figure}

\begin{proof}[Proof of Theorem~\ref{thm:continuity-of-entropy}: left-continuity with $\beta$ non-periodic]
Note that $\gamma$ is an isolated zero. Indeed, in its domain of convergence, the function $\xi_{p}$ is holomorphic. Consequently, the existence of a sequence of zeros of $\xi_{p}$ converging to $\gamma$ would imply that $\xi_{p}$ is the constant zero function; a contradiction.  Let $\varepsilon > 0$ be fixed such that $(\gamma -\varepsilon, \gamma+\varepsilon) \subseteq (1,2)$ and such that $\xi_{p}$ has a single root in this interval, namely at $\gamma$.  Let $C > 1$ be as in Definition~\ref{def:Lorenz} and fix $u \in (1,C)$. The specific value of $u$ is not important, so for convenience set $u = (1+C)/2$, and by \eqref{eq:entropy_lower_bound} and Theorem~\ref{thm:entropy-is-the-zero} the real zeros of $\xi_{q}$ are greater than $u$ for $q \in (a, b)$.

Assume that $\gamma$ has odd multiplicity.  Let $n \in \mathbb{N}$ be such that 
	\begin{align*}
	\lvert \xi_{p}(\gamma - \varepsilon) \rvert \geq \frac{2u^{-n}}{1 - u^{-1}}
	\quad \text{and} \quad
	\lvert \xi_{p}(\gamma + \varepsilon) \rvert \geq \frac{2u^{-n}}{1 - u^{-1}},
	\end{align*}
and let $\delta$ be chosen in accordance with Lemma~\ref{lem:claim1_non-periodic}.  In which case, for all $\delta' \in (0,\delta)$,
	\begin{align*}
	\left \lvert \xi_{p-\delta'}(\gamma -\varepsilon) - \xi_{p}(\gamma -\varepsilon)\right \rvert
	< \frac{2u^{-n}}{1-u^{-1}} 
	\;\; \text{and} \;\;
	\left \lvert \xi_{p-\delta'}(\gamma +\varepsilon) - \xi_{p}(\gamma +\varepsilon)\right \rvert
	< \frac{2u^{-n}}{1-u^{-1}},
	\end{align*} 
which ensures $\operatorname{sgn}(\xi_{p-\delta'}(\gamma \pm \varepsilon)) = \operatorname{sgn}(\xi_{p}(\gamma \pm. \varepsilon))$, respectively.  This together with the fact that $\xi_{p}$ is smooth and has a single root in $(\gamma -\varepsilon, \gamma+\varepsilon)$ and an application of the intermediate value theorem yields that $\xi_{p_-\delta'}$ has a zero in $(\gamma -\varepsilon, \gamma+\varepsilon)$ for all $\delta' \in (0,\delta)$; see Figure~\ref{fig:gamma-nbd}.  

Assume that $\gamma$ has even multiplicity.  In this case, $\xi_p(x) > 0$ for all $x \neq \gamma$ with $x \in [u, 2]$, since $\xi_{p}$ is smooth, $\xi_{p}(2) > 0$ and, by Theorem~\ref{thm:entropy-is-the-zero}, we have $\xi_p$ has a single zero in the interval $(1, 2)$.  Let $\rho \coloneqq \inf \left\{ \xi_{p}(x)/2 \colon x \in [u, \gamma - \epsilon] \cup [\gamma + \epsilon, 2] \right\}$, let $n \in \mathbb{N}$ be such that 
	\begin{align*}
	\frac{2u^{-n}}{1 - u^{-1}} < \rho,
	\end{align*}
and let $\delta$ be chosen in accordance with Lemma~\ref{lem:claim1_non-periodic}.  In which case, for all $\delta' \in (0,\delta)$ and $x \in [u, \gamma - \epsilon] \cup [\gamma + \epsilon, 2]$,
	\begin{align*}
	\xi_{p-\delta'}(x) \geq \xi_{p}(x) - \frac{2u^{-n}}{1-u^{-1}} > \xi_{p}(x) - \rho > 0
	\end{align*} 
which ensures that $\xi_{p-\delta'}(x)$ has no zeros in $[u, \gamma - \epsilon] \cup [\gamma + \epsilon, 2]$.  Therefore, by Theorem~\ref{thm:entropy-is-the-zero} and \eqref{eq:entropy_lower_bound}, namely that the real zeros of $\xi_{q}$ belong to the interval $(u, 2]$ for $q \in (a, b)$, we have $\xi_{p_-\delta'}$ necessarily has a zero in the interval $(\gamma -\varepsilon, \gamma+\varepsilon)$

Therefore, regardless of the multiplicity of $\gamma$, it is necessarily the case that $\xi_{p_-\delta'}$ has a zero in $(\gamma -\varepsilon, \gamma+\varepsilon)$ for all $\delta' \in (0, \delta)$, whence Theorem~\ref{thm:entropy-is-the-zero} implies that $\lvert h(T_{p})-h(T_{p-\delta'}) \rvert \leq \varepsilon$ for all $\delta' \in (0, \delta)$, as required.
\end{proof}

\subsection{Case 2: $\beta$ periodic}\label{sec:periodic}

In this section we assume $\beta$ is periodic with period $N$, for some $N \in \mathbb{N}$. 

\begin{lemma}\label{lem:claim1}
For $n \in \mathbb{N}$ with $n \geq N$, there exists $\delta > 0$ such that, for all $\delta' \in (0, \delta)$, the concatenation of $\tau^+_{p}(p)\vert_{N}$ and $\tau^-_{p}(p)\vert_{n-N}$ is equal to $\tau^+_{p-\delta'}(p-\delta')\vert_n$, namely 
	\begin{align*}
	\tau^+_{p-\delta}(p-\delta')\vert_n = (\tau^+_{p}(p)\vert_{N}) (  \tau^-_{p}(p)\vert_{n-N}).
	\end{align*}
\end{lemma}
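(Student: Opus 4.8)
The plan is to understand what happens to the itinerary of the point $p-\delta'$ as $\delta'$ shrinks to $0$, under the assumption that $\beta = \tau_p^+(p)$ is periodic with period $N$. The key geometric fact is that periodicity of $\beta$ means $(T_p^+)^N(p) = p$, so the orbit of $p$ returns exactly to $p$ after $N$ steps. For the nearby point $p - \delta'$, its orbit under $T_{p-\delta'}^+$ will track the orbit of $p$ for the first $N$ symbols (by a continuity/left-continuity argument), but at step $N$ it lands near $p$ rather than exactly at $p$, and here the tie-breaking convention matters. **The main point** is that $p-\delta' < p$, so the perturbed orbit lands just below $p$ at time $N$, which means the itinerary from that point onward should agree with the *lower* itinerary $\alpha = \tau_p^-(p)$ of $p$, not the upper one. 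This is the source of the asymmetric concatenation $(\tau^+_p(p)|_N)(\tau^-_p(p)|_{n-N})$ in the statement.

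Let me set this up carefully. **First I would** establish that for small $\delta'$ the first $N$ symbols of $\tau^+_{p-\delta'}(p-\delta')$ equal $\tau^+_p(p)|_N = \beta|_N$; this is essentially the left-continuity of $x \mapsto \tau_x^+(x)$ at $p$ applied to the first $N$ coordinates, which gives a $\delta$ so that $\tau^+_{p-\delta'}(p-\delta')|_N = \beta|_N$ for $\delta' \in (0,\delta)$. The content beyond this is showing the remaining $n-N$ symbols. **Next I would** analyze the position of $(T^+_{p-\delta'})^N(p-\delta')$ relative to $p$. Using the bilipschitz bounds from Definition~\ref{def:Lorenz}\eqref{defn:part:Lorenz2}, one controls the deviation $(T^+_p)^N(p) - (T^+_{p-\delta'})^N(p-\delta')$ just as in the inductive estimate \eqref{eqn:inductive_close} of Lemma~\ref{thm:nonperiodic-to-continuous}: along the branches taken so far the deviation stays positive and bounded, so $(T^+_{p-\delta'})^N(p-\delta')$ is strictly less than $(T^+_p)^N(p) = p$. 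Thus at time $N$ the perturbed orbit is a small amount \emph{below} $p$.

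**The crucial step** is then to iterate from this point $y \coloneqq (T^+_{p-\delta'})^N(p-\delta')$, which satisfies $y < p$ and is close to $p$ from below. I claim the symbols $\tau^+_{p-\delta'}(p-\delta')_N \tau^+_{p-\delta'}(p-\delta')_{N+1}\cdots$ (the tail starting at position $N$) agree with $\alpha|_{n-N} = \tau^-_p(p)|_{n-N}$ for small enough $\delta'$. The reason $\alpha$ (the lower itinerary of $p$) appears is exactly that $y$ approaches $p$ from the left: the lower itinerary map $\tau_p^-$ records the limiting behavior of points approaching $p$ from below, and by the left-continuity statement in Lemma~\ref{thm:always-continuous} together with strict monotonicity, points just left of $p$ have itineraries matching $\tau^-_p(p)$ on longer and longer prefixes. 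I would make this precise by a second application of the bilipschitz squeezing argument, now comparing the orbit of $y$ (under $T^+_{p-\delta'}$) to the orbit of $p$ under $T^-_p$, using that $\alpha$ being the lower itinerary corresponds to $p$ being a left endpoint; one chooses $\delta'$ small enough (hence $p - y$ small enough) that these two orbits stay on the same side of the relevant discontinuity threshold for all of the first $n-N$ steps.

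**The hard part will be** the tie-breaking at the discontinuity and the transition between the two convention regimes — specifically justifying rigorously why the tail switches from the $+$ convention ($\geq p \mapsto 1$) to matching the $-$ kneading sequence $\alpha$. The subtlety is that we are always iterating the single map $T^+_{p-\delta'}$ (which uses the $+$ convention throughout), yet the resulting symbols coincide with $\tau^-_p(p)$, not $\tau^+_p(p)$; this coincidence must be extracted from the fact that $y<p$ strictly, so the orbit of $y$ genuinely lies below $p$ where it shadows the left-approaching behavior encoded by $\alpha$. One must also handle the edge case where the orbit of $p$ under $T^-_p$ itself passes exactly through $p$ within the next $n-N$ steps, which could be avoided by shrinking $\delta'$ further using a finite minimum analogous to the quantity in \eqref{eqn:delta_min_sqeeze}. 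I would organize the final argument as a single choice of $\delta$ that is the minimum of the $\delta$ from the first $N$ symbols and the $\delta$ governing the tail of $n-N$ symbols, so that both prefix agreements hold simultaneously for all $\delta' \in (0,\delta)$.
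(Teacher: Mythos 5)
Your proposal follows essentially the same route as the paper's proof: establish the first $N$ symbols via the bilipschitz squeezing estimate of Lemma~\ref{thm:nonperiodic-to-continuous}, use periodicity to show $(T^+_{p-\delta'})^N(p-\delta')$ returns strictly below the critical point (the lower bound $\delta' c^N$ with $c>1$ is what gives $y < p-\delta'$, not merely $y<p$), and conclude that the tail follows the lower kneading sequence $\alpha$, taking $\delta$ as a minimum of the constraints. The paper packages your ``crucial step'' slightly differently --- it identifies the tail with $\tau^-_{p-\delta'}(p-\delta')\vert_{n-N}$ and then invokes the left-continuity of $x \mapsto \tau_x^-(x)$ from Lemma~\ref{thm:always-continuous} rather than running a second squeezing argument against the orbit of $p$ under $T^-_p$ --- but the underlying idea is the same.
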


\begin{proof}
Let $n \geq N$ denote a fixed integer.  By Lemma~\ref{thm:always-continuous} there exists $\eta > 0$ such that, if $\eta' \in (0, \eta)$, then $(T_{p-\eta'}^+)^{j}(p-\eta') \neq p - \eta'$ for all $j \in \{ 1, 2, \dots, N+n-1 \}$.  Using the same arguments as in the proof of Lemma~\ref{thm:nonperiodic-to-continuous}, we may choose $\eta$ small enough so that, in addition to this, if $\eta' \in (0, \eta)$, then $\eta' c^j \leq (T_p^+)^j(p) - (T_{p-\eta'}^+)^j (p-\eta') \leq \delta C^j$ for all $j \in \{ 0, 1, \dots, N \}$; here $c$ and $C$ are as in Definition~\ref{def:Lorenz}.  If $j = N$, then this yields $\eta' c^N < p-(T_{p-\eta'}^+)^N (p-\eta') < \eta' C^N$ for $\eta' \in (0, \eta)$.  Since $c > 1$, this implies that $0 < (p-\eta') - (T_{p-\eta'}^+)^N(p-\eta') < \eta' (C^N-1)$.  By Lemma~\ref{thm:always-continuous}, there exists $\lambda > 0$ so that, for $q \in [a, b]$ with $0 < p - q < \lambda$, we have $\tau_{p}^-(p)\vert_n = \tau_{q}^-(q)\vert_n$.  Setting $\delta \coloneqq \min\{ \lambda C^{-N}/2, \eta \}$, if $\delta' \in (0, \delta)$, then $0 < (p-\delta') - (T_{p-\delta'}^+)^N (p-\delta') < \lambda/2$, which implies 
	\begin{align*}
	\tau_{p-\delta'}^-(p-\delta')\vert_n
	&= \tau_{p-\delta'}^-((T_{p-\delta'}^+)^N (p-\delta'))\vert_n\\
	&= \tau_{p-\delta'}^+((T_{p-\delta'}^+)^N (p-\delta'))\vert_n\\
	&= S^N(\tau_{p-\delta'}^+(p-\delta'))\vert_n.
	\end{align*}
Therefore, $\tau^+_{p-\delta}(p-\delta)\vert_n = (\tau^+_{p-\delta}(p-\delta)\vert_{N} ) ( \tau^-_{p-\delta}(p-\delta)\vert_{n-N})$. By the same arguments as in the proof of Lemma~\ref{thm:nonperiodic-to-continuous}, if $\delta > 0$ is suitably small, then $\tau^+_{p-\delta}(p-\delta)\vert_{N}=\tau^+_p(p)\vert_{N}$, and by Lemma~\ref{thm:always-continuous}, for $\delta > 0$ suitably small, $\tau^-_{p-\delta}(p-\delta)\vert_{n-N}=\tau^-_p(p)\vert_{n-N}$.
\end{proof}

\begin{lemma}\label{lem:claim2}
If $\varepsilon > 0$ and $(\gamma-\varepsilon,\gamma+\varepsilon) \subseteq (1,2)$, then there exists $n \in \mathbb{N}$ such that for the $\delta$ guaranteed by Lemma~\ref{lem:claim1}, we have 
\begin{align*}
	\lvert \xi_{p}(x) - \xi_{p-\delta'}(x) \rvert < \varepsilon, 
\end{align*}
for all $x \in (u, 2]$ and $\delta' \in (0, \delta)$, where, as in Section~\ref{sec:nonperiodic}, we set $u = (1+ C)/2$.
\end{lemma}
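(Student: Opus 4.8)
The plan is to establish the stated bound by replacing each of $\xi_p$ and $\xi_{p-\delta'}$ with its length-$n$ partial sum and controlling the two tails uniformly on $(u,2]$. Set $\beta=\tau_p^+(p)$, $\alpha=\tau_p^-(p)$ and, for $\delta'\in(0,\delta)$, $\beta^{(\delta')}=\tau_{p-\delta'}^+(p-\delta')$, $\alpha^{(\delta')}=\tau_{p-\delta'}^-(p-\delta')$, so that $\xi_{p-\delta'}(x)=\sum_{k\ge 0}(\beta^{(\delta')}_k-\alpha^{(\delta')}_k)x^{-k}$. Fix $n\ge N$. As in \eqref{eqn:Rp-bound}, every coefficient lies in $\{-1,0,1\}$, so for $x\in(u,2]$ the tails of both series past index $n$ are each at most $x^{-n}/(1-x^{-1})\le u^{-n}/(1-u^{-1})$. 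The two facts that pin down the first $n$ coefficients of $\xi_{p-\delta'}$ are Lemma~\ref{lem:claim1}, giving $\beta^{(\delta')}\vert_n=(\beta\vert_N)(\alpha\vert_{n-N})$, and the unconditional left-continuity of $x\mapsto\tau_x^-(x)$ from Lemma~\ref{thm:always-continuous}, which (as is already built into the $\delta$ of Lemma~\ref{lem:claim1}) gives $\alpha^{(\delta')}\vert_n=\alpha\vert_n$.

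With these in hand I would compare the two partial sums coefficient by coefficient. For $0\le k<N$ one has $\beta^{(\delta')}_k=\beta_k$ and $\alpha^{(\delta')}_k=\alpha_k$, so these terms cancel. For $N\le k<n$ one has $\beta^{(\delta')}_k=\alpha_{k-N}$ and $\alpha^{(\delta')}_k=\alpha_k$, while the periodicity of $\beta$ gives $\beta_k=\beta_{k-N}$; hence the $k$-th coefficient of $\xi_p-\xi_{p-\delta'}$ equals $(\beta_k-\alpha_k)-(\alpha_{k-N}-\alpha_k)=\beta_{k-N}-\alpha_{k-N}$. Reindexing by $j=k-N$ gives, up to the two tails,
\begin{align*}
\xi_p(x)-\xi_{p-\delta'}(x)=\sum_{k=N}^{n-1}(\beta_{k-N}-\alpha_{k-N})x^{-k}+(\text{tails})=x^{-N}\sum_{j=0}^{n-N-1}(\beta_j-\alpha_j)x^{-j}+(\text{tails}).
\end{align*}
Choosing $n$ so large that $2u^{-n}/(1-u^{-1})<\varepsilon$ and then taking $\delta$ from Lemma~\ref{lem:claim1}, the two tails together contribute less than $\varepsilon$ uniformly on $(u,2]$, so this identity is exact up to an error bounded by $\varepsilon$.

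The main obstacle — and the point where Case 2 genuinely differs from Case 1 — is that the middle sum above is exactly the length-$(n-N)$ partial sum of $\xi_p$, so the computation does not yield $\xi_{p-\delta'}\to\xi_p$ but rather $\xi_{p-\delta'}\to(1-x^{-N})\xi_p$ uniformly on $(u,2]$; this reflects the failure of left-continuity of $x\mapsto\tau_x^+(x)$ at a periodic $\beta$. The clean uniform estimate that one actually obtains on all of $(u,2]$ is therefore $\lvert(1-x^{-N})\xi_p(x)-\xi_{p-\delta'}(x)\rvert<\varepsilon$, and the stated inequality $\lvert\xi_p(x)-\xi_{p-\delta'}(x)\rvert<\varepsilon$ must be read through this correction: near $x=\gamma$ the term $x^{-N}\xi_p(x)$ is negligible because $\xi_p(\gamma)=0$, so the stated bound does hold on a neighbourhood of $\gamma$, which is precisely the range used in the subsequent root-tracking; and since $1-x^{-N}\in(0,1)$ for $x\in(u,2]$, the correction introduces no spurious zeros and preserves the sign of $\xi_p$ away from $\gamma$, so the zero-counting argument of Case 1 carries over. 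Verifying the middle-range bookkeeping and the uniformity of the tail bound over $(u,2]$ are then the only remaining technical points, both routine given Lemma~\ref{lem:claim1} and \eqref{eqn:Rp-bound}.
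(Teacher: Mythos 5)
Your proposal is correct, and it arrives at exactly the same final estimate as the paper --- indeed by a cleaner route, and it makes explicit a point the paper leaves implicit. The paper's proof expands $\xi_{p-\delta'}$ in blocks of length $N$ via the closed form $\sum_{k\geq 0}\beta_k x^{-k}=(1-x^{-N})^{-1}\sum_{k=0}^{N-1}\beta_k x^{-k}$, tracks three remainder terms $\eta_{1,v}$, $\eta_{2,v}^{(n)}$ and $\eta_{3,v}$, and after a substitution lands on $\lvert (1-x^{-N})\xi_p(x)-\xi_{p-\delta'}(x)\rvert \leq K x^{-Nv}$ for all $x\in(u,2]$ --- that is, \emph{with} the factor $(1-x^{-N})$, exactly as in your computation, even though the lemma is stated without it. Your coefficientwise cancellation --- for $N\leq k<n$ the $k$-th coefficient of $\xi_p-\xi_{p-\delta'}$ equals $\beta_k-\alpha_{k-N}=\beta_{k-N}-\alpha_{k-N}$ by periodicity, so the middle range reassembles into $x^{-N}$ times a partial sum of $\xi_p$ --- reaches the same identity with a single reindexing and the tail bound \eqref{eqn:Rp-bound} (three tail terms in all rather than your two, since converting that partial sum back into $\xi_p$ costs one more tail; this is the routine bookkeeping you flagged). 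Most importantly, your diagnosis of the ``obstacle'' is accurate and is shared by the paper itself: the uniform inequality $\lvert \xi_p(x)-\xi_{p-\delta'}(x)\rvert<\varepsilon$ on all of $(u,2]$ as literally stated is generally false, because $\xi_{p-\delta'}$ converges uniformly to $(1-x^{-N})\xi_p$, not to $\xi_p$; the paper's own proof concludes with the corrected inequality (``as required''), and the subsequent root-tracking argument only uses the location of zeros and signs of $\xi_p$ on $(u,2]$, which the strictly positive factor $1-x^{-N}\geq 1-u^{-N}>0$ preserves --- precisely the repair you describe. So your write-up is not merely an alternative derivation but a more accurate account of what this lemma can, and does, deliver.
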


\begin{proof}
Since $\beta$ is periodic with period $N$, for all $x \in (1, 2]$,
	\begin{align*}
	\xi_{p}(x) 
	= \sum_{k=0}^{\infty} \beta_{k}x^{-k} - \sum_{k=0}^{\infty} \alpha_{k}x^{-k}
 	= \frac{1}{1-x^{-N}} \sum_{k=0}^{N-1} \beta_{k} x^{-k}  - \sum_{k=0}^{\infty} \alpha_{k}x^{-k}.
	\end{align*}
Let $v \in \mathbb{N}$ be fixed.  For $x \in (1, 2)$ set $\eta_{1, v}(x) \coloneqq \sum_{k=Nv}^\infty (\beta_k-\alpha_k) x^{-k}$ and observe
	\begin{align}\label{eqn:xi_p-decomposition2}
	\xi_{p}(x) 
 	= \frac{1-x^{-Nv}}{1-x^{-N}} \sum_{k=0}^{N-1} \beta_{k} x^{-k} 
		- \sum_{k=0}^{Nv-1} \alpha_{k} x^{-k} 
		+ \eta_{1, v}(x).
	\end{align}
Note, $\eta_{1, v}(x)$ is bounded by the tail of a geometric series, namely we have that $\lvert \eta_{1, v}(x) \rvert \leq x^{-Nv}/(1-x^{-1})$.

Let $n \geq N(v + 1)$.  By Lemma~\ref{lem:claim1}, for $\delta' \in (0, \delta)$, an expansion of $\xi_{p-\delta'}(x)$ similar to \eqref{eqn:xi_p-decomposition2} yields
	\begin{align}\label{eqn:xi_(p-d)-decomposition}
	\xi_{p-\delta'}(x)
	= \sum_{k=0}^{N-1} \beta_{k} x^{-k} 
		- \sum_{k=0}^{Nv-1} \alpha_{k} x^{-k}
		+ \sum_{k=N}^{Nv-1} \alpha_{k-N} x^{-k} 
		+ \eta_{2, v}^{(n)}(x, \delta')(x).
	\end{align}
Here $\eta_{2, v}^{(n)}(\cdot, \delta')$ consists of remaining terms in the expansion of $\xi_{p-\delta'}(x)$.  In \eqref{eqn:xi_(p-d)-decomposition} we have used the observation made in the last line of the proof of Lemma~\ref{lem:claim1}.  Also, note the remainder term $\eta_{2, v}^{(n)}(\cdot, \delta')$ is bounded by the tail of a geometric series, namely $\lvert \eta_{2, v}^{(n)}(x) \rvert \leq x^{-Nv}/(1-x^{-1})$.  Combining the above, we have
	\begin{align}\label{eqn:xi_p-xi_(p-d)}
	\!\!\xi_{p}(x)\!-\!\xi_{p-\delta'}(x)
	\!=\!\frac{x^{-N}}{1-x^{-N}} \sum_{k=0}^{N-1} \!\beta_{k} x^{-k} 
	\!- \eta_{3, v}(x) 
	- \!\sum_{k=N}^{Nv-1}\! \alpha_{k-N} x^{-k} 
	\!- \eta_{2,v}^{(n)}(x, \delta'),
	\end{align}
where $\eta_{3, v}(x) \coloneqq \sum_{k=Nv}^{\infty} \alpha_{k} x^{-k}$.  As with $\eta_{1, v}$ and $\eta_{2,v}^{(n)}(\cdot, \delta')$, observe that $\eta_{3, v}$ is bounded by the tail of a geometric series, namely $\lvert \eta_{3, v}(x) \rvert \leq x^{-Nv}/(1-x^{-1})$. Reindexing the second series in \eqref{eqn:xi_p-decomposition2} and rearranging yields 
	\begin{align*}
	\sum_{k=N}^{N(v+1)-1} \alpha_{k-N} x^{-k}
	&= x^{-N} \left(\frac{1-x^{-Nv}}{1-x^{-N}} \sum_{k=0}^{N-1} \beta_{k}x^{-k} 
		- \xi_{p}(x) + \eta_{1, v}(x)\right).
	\end{align*}
Substituting this into \eqref{eqn:xi_p-xi_(p-d)} gives, for $\delta' \in (0, \delta)$,
	\begin{align*}
	\begin{aligned}
	&\xi_{p}(x)-\xi_{p-\delta'}(x)\\
	&=  \frac{x^{-Nv} x^{-N}}{1-x^{-N}} \sum_{k=0}^{N-1} \beta_{k} x^{-k} 
	- \eta_{3, v}(x) 
	+ x^{-N} \xi_{p}(x) - x^{-N} \eta_{1, v}(x) 
	- \eta_{2, v}^{(n)}(x, \delta').
	\end{aligned}
	\end{align*}
Taking absolute values and using the bounds obtained for $\eta_{1, v}$, $\eta_{2, v}^{(n)}(\cdot, \delta')$ and $\eta_{3, v}$ gives, for a suitable constant $K>0$, that $\left\lvert (1-x^{-N})\xi_{p}(x)-\xi_{p-\delta'}(x) \right\rvert \leq K x^{-Nv}$, for all $\delta' \in (0, \delta)$ and $x \in (u, 2]$, as required.
\end{proof}

\begin{proof}[Proof of Theorem~\ref{thm:continuity-of-entropy}: left-continuity with $\beta$ periodic.]
The proof proceeds as in the proof of left-continuity of Theorem~\ref{thm:continuity-of-entropy} for $\beta$ non-periodic, with the following modification.  Instead of choosing $\delta$ in accordance with Lemma~\ref{lem:claim1_non-periodic}, we use Lemma~\ref{lem:claim1} and Lemma~\ref{lem:claim2} to choose $\delta$; the remainder of the proof follows identically.
\end{proof}

\subsection{Right continuity}
\label{sec:right-continuity}

In Section~\ref{sec:nonperiodic} and~\ref{sec:periodic}, the point $p$ was shifted to the left by subtracting some $\delta > 0$ from $p$. By adding $\delta>0$ to $p$ instead of subtracting it, right continuity can be shown by repeating Section~\ref{sec:nonperiodic} and~\ref{sec:periodic} with the following substitutions: swap $\tau_p^+$ with $\tau_p^-$, $T_p^+$ with $T_p^-$, and $p-\delta$ with $p+\delta$. Then, proceeding by cases (whether or not $\alpha$ is periodic). Therefore, it follows that $x \mapsto h(T_x)$ is continuous at every point $p \in (a, b)$. This concludes the proof of Theorem~\ref{thm:continuity-of-entropy}.

	\begin{figure}[t]
	\includegraphics[height=15.25em]{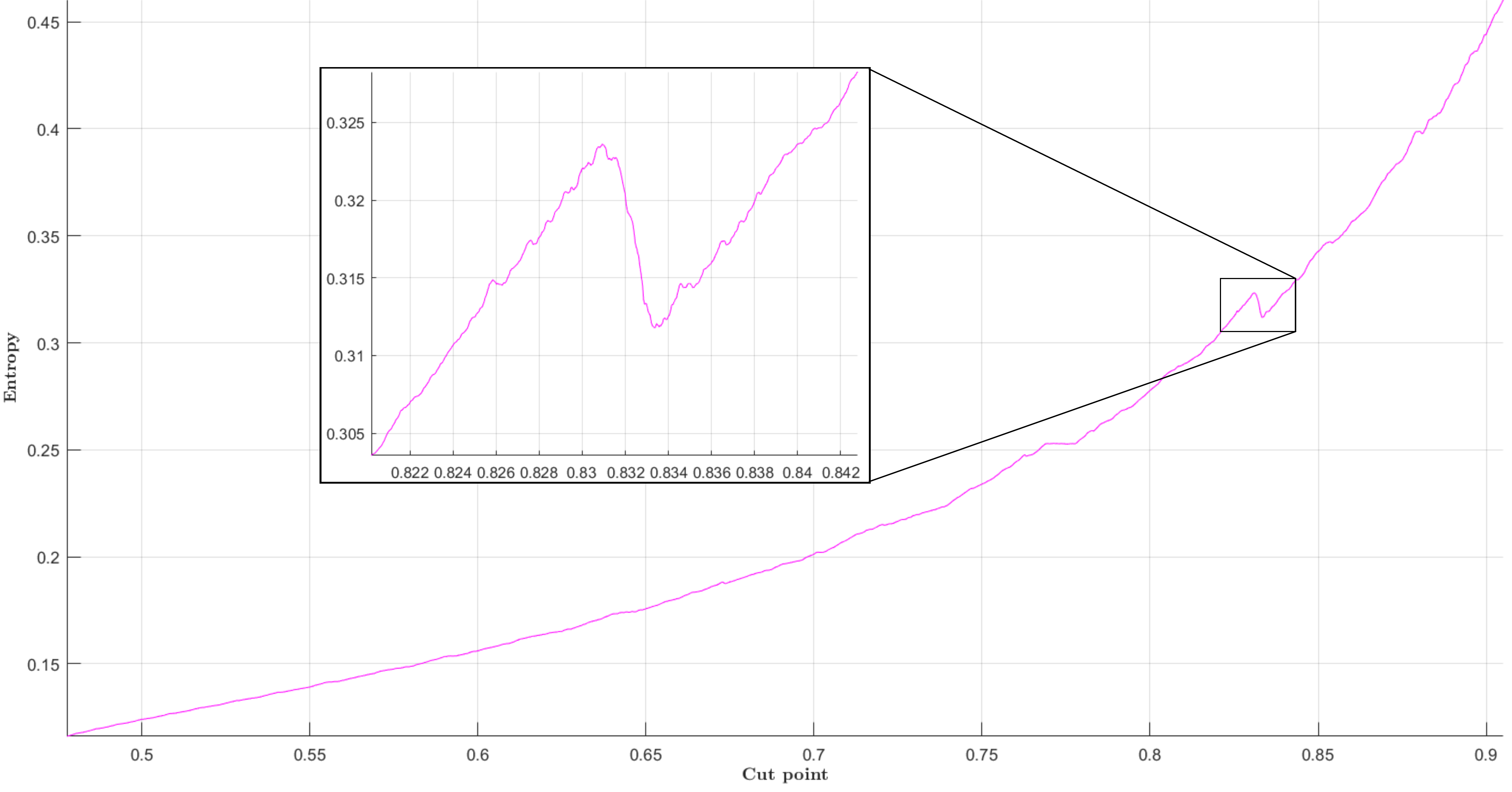}
	\caption{Numerical approximation of the topological entropy of the affine Lorenz map $T_{p}$, with first branch $f_{0}(x) \coloneqq 1.1 x$ and second branch $f_{1}(x) \coloneqq 1.9 x - 0.9$, using the algorithm developed in \cite{SSV} -- truncation term: $n=500$; tolerance: $\varepsilon=10^{-7}$.}
	\label{fig:affine-entropy}
	\includegraphics[height=15.25em]{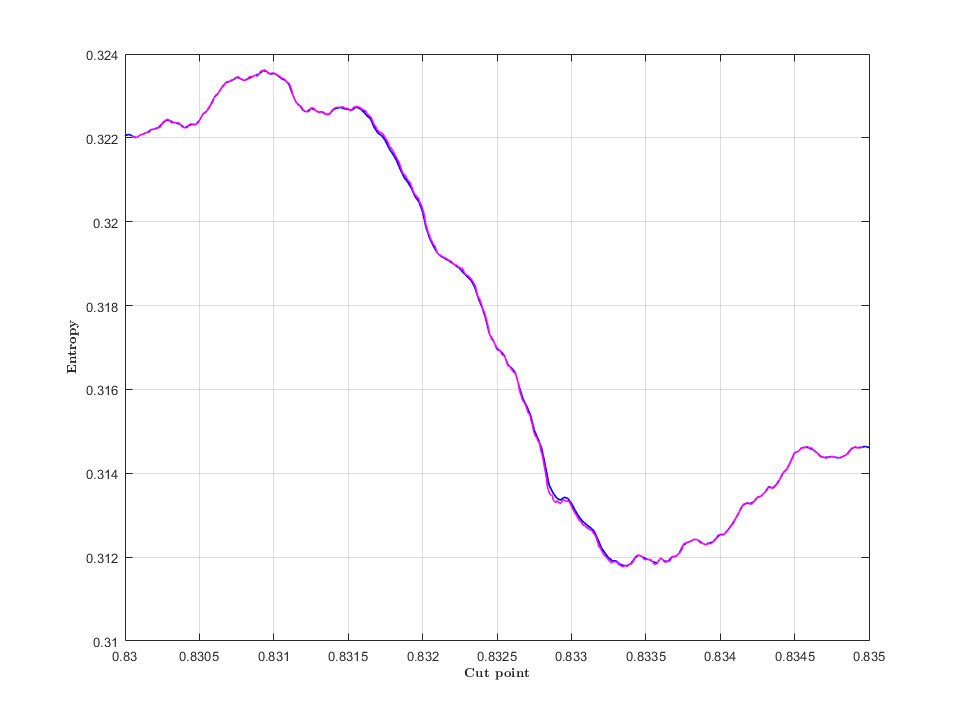}
	\caption{Agreement of separate numerical methods to compute topological entropy on the highlighted non-monotonic feature of Figure~\ref{fig:affine-entropy}.  The magenta curve uses the algorithm developed in \cite{SSV} and the blue curve computes the lap number of $(T_{p}^{\pm})^{50}$.}
	\label{fig:non-monotonic-entropy}
	\end{figure}

\section{Monotonicity of topological entropy}\label{sec:affine_Lorenz}

An \textsl{affine Lorenz map} $T_p$ is a Lorenz map where the branch functions $f_0$ and $f_1$ are affine, namely $f_0(x) = b_0 x$ and $f_{1} = 1 - b_{1} + b_{1}x$ where $0 < b_{0} \leq p \leq b_{1}$, $b_{0}, b_{1} > 1$ and $b_{0} + b _{1} > b_{0} b_{1}$.  In the special case when $b_0=b_1$, the map $T_p$ is called a \textsl{uniform Lorenz map} and is denoted by $T^{\pm}_{p,b}$.  Here $b$ denotes the common value $b_{0} = b_{1}$.

In \cite{H,Par} it is shown that $h(T_{p,b}) = \ln(b)$.  Further, Glendinning \cite{G}, Palmer \cite{P:1979} and Parry \cite{Par}, proved that a large class of piecewise monotone transformations of the unit interval are topologically conjugate to a uniform Lorenz map. 

\begin{theorem}[\cite{G,P:1979,Par}]
Every topologically transitive Lorenz map is topologically conjugate to a uniform Lorenz map.
\end{theorem}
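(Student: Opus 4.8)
The plan is to pass to the common symbolic model shared by all Lorenz maps with a fixed kneading pair, to realise that model by a uniform Lorenz map whose slope is the exponential of the entropy, and then to use topological transitivity to upgrade the resulting order-preserving semiconjugacy to a homeomorphism. First I would fix a topologically transitive Lorenz map $T$ with kneading sequences $\alpha = \tau_p^-(p)$ and $\beta = \tau_p^+(p)$, and set $\gamma \coloneqq \exp(h(T))$; by \eqref{eq:entropy_lower_bound} and Theorem~\ref{thm:entropy-is-the-zero} we have $\gamma \in (1,2]$. By Lemma~\ref{thm:always-continuous} the itinerary map $\tau = \tau_p$ is strictly increasing and one-sidedly continuous, and it semiconjugates $T$ to the left shift $S$ on the subshift $\Sigma_{\alpha,\beta} \subseteq \Omega$ of admissible sequences cut out by the usual lexicographic conditions relative to $\alpha$ and $\beta$; the essential point is that $\Sigma_{\alpha,\beta}$ depends only on the kneading pair $(\alpha,\beta)$ and not on the particular branch functions.

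Next I would realise $(\alpha,\beta)$ by a uniform map. Invoking the work of Parry and Palmer on $\gamma$-expansions, the pair $(\alpha,\beta)$, being admissible kneading data with exponential growth rate $\gamma$, is itself the kneading pair of a uniform Lorenz map $\hat T = T_{\hat p,\gamma}$ of slope $\gamma$ for a suitable break point $\hat p$. The existence of $\hat p$ follows from the monotone, one-sidedly continuous dependence of the kneading sequences on the break point (Lemma~\ref{thm:always-continuous}), together with the fact that, for the uniform map, the exponential growth rate of admissible words equals the slope, so that $h(\hat T) = \ln\gamma = h(T)$, consistent with the identity $h(T_{p,b}) = \ln b$. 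Because $\hat T$ is piecewise linear with constant slope $\gamma > 1$, its itinerary map $\hat\tau$ admits a continuous, monotone one-sided inverse, namely the evaluation map $\psi \colon \Sigma_{\alpha,\beta} \to [0,1]$ sending an admissible sequence to the unique point carrying it, given by the associated $\gamma$-expansion; moreover $\hat\tau$ is a bijection off the countable set of preimages of $\hat p$, and $\psi$ intertwines $S$ with $\hat T$.

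Finally, I would set $\phi \coloneqq \psi \circ \tau \colon [0,1] \to [0,1]$. By construction $\phi$ is monotone and, since $\tau \circ T = S \circ \tau$ and $\hat T \circ \psi = \psi \circ S$, it satisfies $\phi \circ T = \hat T \circ \phi$, so it is a semiconjugacy. It then remains to show that $\phi$ is a homeomorphism. As topological transitivity is characterised purely in terms of the kneading sequences (Glendinning's criterion recalled in the introduction), $\hat T$ is transitive as well, so $\Sigma_{\alpha,\beta}$ is a transitive subshift. Transitivity of $T$ forbids wandering intervals and forces $\tau$ to collapse only the unavoidable countable family of endpoint pairs of the gap cylinders, which are exactly the identifications already made by $\hat\tau$; hence $\phi$ is strictly increasing, and a strictly increasing surjection of $[0,1]$ onto itself is a homeomorphism. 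Thus $\phi$ is a topological conjugacy between $T$ and the uniform Lorenz map $\hat T$.

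The crux, and the only place transitivity is genuinely used, is this last step: proving that the order-preserving semiconjugacy $\phi$ is injective. The itinerary map is always monotone, but in general it identifies points whose orbits never separate symbolically, corresponding to wandering intervals or to intervals whose forward orbit avoids a neighbourhood of $p$. The technical heart is to establish that topological transitivity eliminates every such identification except the countable family of gap-endpoint pairs, and that $T$ and $\hat T$ make precisely the same identifications, so that $\hat\tau^{-1}\circ\tau$ descends to a well-defined strictly monotone bijection. Establishing this equivalence between transitivity and injectivity of the semiconjugacy is the substantive content; the remaining steps are the classical kneading-theoretic bookkeeping of Parry, Palmer and Glendinning.
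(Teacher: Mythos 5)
This theorem is stated in the paper as a quoted result of Glendinning, Palmer and Parry; the paper supplies no proof of its own, so there is nothing internal to compare your argument against. Judged on its own terms, your outline is the classical route and is structurally sound: pass to the symbolic model determined by the kneading pair $(\alpha,\beta)$, realise that pair by a uniform map of slope $\gamma=\exp(h(T))$, compose the itinerary map of $T$ with the evaluation map of the uniform model to get an order-preserving semiconjugacy $\phi$, and use transitivity to show $\phi$ is injective. The identification of the crux (transitivity kills exactly the extra identifications, so $T$ and $\hat T$ collapse the same pairs) is the right one, and your observation that the entropy, hence the slope of any uniform realisation, is determined by $(\alpha,\beta)$ via Theorem~\ref{thm:entropy-is-the-zero} correctly pins down $\gamma$.

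There are, however, two load-bearing steps that you assert rather than prove, and together they constitute essentially the entire content of the theorem. First, the existence of a uniform Lorenz map $T_{\hat p,\gamma}$ whose kneading pair is exactly $(\alpha,\beta)$ is a genuine realisation theorem (Hubbard--Sparrow type): you must show that the kneading pair of a transitive Lorenz map satisfies the admissibility conditions for slope-$\gamma$ uniform maps, and the intermediate-value argument you sketch via Lemma~\ref{thm:always-continuous} only gives one-sided continuity of $\hat p\mapsto\tau_{\hat p}^{\pm}(\hat p)$, which is not enough to hit a prescribed pair on the nose. Note also that the classical proofs of Parry and Glendinning run in the opposite direction: they build the semiconjugacy first (via lap-number asymptotics or the eigenmeasure of the transfer operator) and obtain the uniform model as its output, which avoids needing the realisation theorem as an input; your inversion of this order is viable but only if you import that realisation result explicitly. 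Second, the step ``transitivity forces $\tau$ to collapse only the gap-endpoint pairs'' is stated but not argued; this is precisely Glendinning's criterion for the semiconjugacy to be a conjugacy, which the paper's introduction also only cites. So the proposal is a correct roadmap through the literature rather than a proof: neither of the two decisive implications is established.
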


In the following we take a first steps to address Milnor's monotonicity conjecture in the setting of affine Lorenz maps.  Indeed, numerical experiments show that there exist affine Lorenz maps $T_p^{\pm}(x)$ where $x \mapsto h(T_x)$ is non-monotonic and non-constant. This phenomena is shown using the algorithm developed in \cite{SSV} and is corroborated by a second algorithm that gives an approximation to topological entropy by computing lap numbers, see for instance \cite{MR1736945}.

The algorithm in \cite{SSV} computes topological entropy by comparing the kneading sequences of a given Lorenz map against kneading sequences of uniform Lorenz maps.  To verify its validity we compare the results against a separate computation which approximates the topological entropy of a given Lorenz map by computing the lap numbers of iterations of the original Lorenz map, see Figure~\ref{fig:non-monotonic-entropy}.

Here we considered the family affine Lorenz map $(T_{p})_{p \in [9/19, 10/11]}$ with branch functions $f_{0}$ and $f_{1}$ given by $f_{0}(x) \coloneqq 1.1 x$ and $f_{1}(x) \coloneqq 1.9 x - 0.9$.  The graph of the map $p \mapsto h(T_{p})$ is shown in Figure~\ref{fig:affine-entropy}; here $h(T_{p})$ has been computed by using the algorithm given in \cite{SSV} with truncation term $n=500$ and tolerance $\varepsilon=10^{-7}$.  We see that there are many instances of non-monotonicity in the plot that exceed the algorithm convergence tolerance.  Figure~\ref{fig:affine-entropy} captures a significant non-monotonic feature.  Indeed, there are many non-monotonic sections of the graph that exceed the algorithm error tolerance.

\section*{Acknowledgements}
	
The authors acknowledge the support of California Polytechnic State University's \textsl{Bill and Linda Frost Fund} and \textsl{College-Based Fees}. Part of this work was completed while T.~Samuel was visiting the Institut Mittag-Leffler as part of the research program \textsl{Fractal Geometry and Dynamics}. He is grateful to the organisers and staff for their very kind hospitality, financial support and a stimulating atmosphere.  
	
\bibliographystyle{plain}
\bibliography{entropy}
	
\end{document}